\documentclass[11pt]{amsart}
\pdfoutput=1

\usepackage{mathptmx}
\usepackage{amssymb, amsfonts, amsthm}
\usepackage{tikz}
\usepackage{pgf}
\usepackage{epstopdf}
\usepackage{booktabs}
\usepackage{multicol}
\usepackage{placeins}
\usepackage{caption} 
\usepackage{subcaption} 
\usepackage{graphicx}
\usepackage{mathdots}
\usepackage{hyperref}
\usepackage[capitalise]{cleveref}
\usepackage{mathrsfs}
\usepackage{txfonts}
\usepackage{amsopn}
\captionsetup{compatibility=false}
\usepackage{checkend}
\usepackage{longtable,geometry}
\usepackage{physics}
\usepackage{etoolbox} 
\usepackage{xcolor}
\usepackage{cases}
\usepackage{scalefnt}
\usepackage{float}

\newtheorem{theorem}{Theorem}

\newtheorem{lemma}{Lemma}

\begin{document}
	
\title[Traveling-Wave Solutions]{Traveling-Wave Solutions to the Nonlinear Double Degenerate Parabolic Equation of Turbulent Filtration with Absorption}

	\author[A. Prinkey]{Adam Prinkey}
	\address{Department of Mathematical Sciences, Florida Institute of Technology, Melbourne, FL 32901}
	\email{aprinkey2009@my.fit.edu}

\maketitle
\begin{center}
Department of Mathematical Sciences, Florida Institute of Technology, Melbourne, FL 32901
\end{center}
\begin{abstract}
In this paper we prove the existence of finite traveling-wave type solutions to the nonlinear double degenerate parabolic equation of turbulent filtration with absorption.
\end{abstract}
\section{Introduction}
\label{sec:intro}

In this paper we consider the nonlinear double degenerate parabolic equation of turbulent filtration with absorption
\begin{equation}\label{EQ1}
u_t=\Big(|(u^{m})_x|^{p-1}(u^{m})_x\Big)_x-bu^{\beta}, \, x \in \mathbb{R}, \, t>0, 
\end{equation}
with $mp>1, \, (m,p>0), \, 0 < \beta < 1, \text{ and } b > 0$. The condition that $mp>1$ implies that the solutions of \eqref{EQ1} travel with a finite speed of propagation (slow diffusion case). We are interested in finding finite traveling-wave solutions to \eqref{EQ1}: $u(x,t) = \varphi(kt-x)$, where the function $\varphi$ is such that: $\varphi(z) \geq 0,\, \varphi \not\equiv 0$, $\varphi(z) \rightarrow 0^+$ as $z \rightarrow -\infty$, and $\varphi(0) = 0$. 

Equation \eqref{EQ1} admits a finite traveling-wave solution if there exists $\varphi \in \mathbb{R}^+$ that satisfies the following initial-value-problem (IVP)
\begin{equation}\label{EQ3}\begin{cases}
\Big(|(\varphi^{m})'|^{p-1}(\varphi^{m})'\Big)'- k\varphi'-b\varphi^{\beta} = 0, \\
\varphi(0) = (\varphi^m)'(0) = 0,
\end{cases}\end{equation}
where $\varphi(z) \equiv 0$ for all $z<0$. All derivatives are understood in the weak sense. 

The following is the main result of this paper.
\begin{theorem}\label{Theorem 1}
There exists a finite traveling-wave solution to \eqref{EQ1}: $\varphi(kt-x)$, with $\varphi(0) = 0$ if $k \neq 0$. Further, we have
\end{theorem}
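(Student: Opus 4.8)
The plan is to reduce the second-order interface problem \eqref{EQ3} to a scalar first-order singular ODE and then construct a solution emanating from the degenerate point. First I would restrict to $\{z>0\}$, where I expect $\varphi>0$ and, as the front advances into the empty region, $\varphi$ increasing, so that $(\varphi^m)'\ge 0$ and $|(\varphi^m)'|^{p-1}(\varphi^m)'=\big((\varphi^m)'\big)^p$. Integrating the equation in \eqref{EQ3} once from $0$ to $z$ and inserting the data $\varphi(0)=(\varphi^m)'(0)=0$ yields the integrated balance $\big((\varphi^m)'(z)\big)^p=k\,\varphi(z)+b\int_0^z\varphi(s)^\beta\,ds$. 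Introducing the flux as a function of the profile, $Q(\varphi):=\big((\varphi^m)'\big)^p$, and using $\varphi'=Q(\varphi)^{1/p}/(m\varphi^{m-1})$ to switch the independent variable from $z$ to $\varphi$, the whole problem collapses to
\[
Q'(\varphi)=k+\frac{b\,m\,\varphi^{\,m+\beta-1}}{Q(\varphi)^{1/p}},\qquad Q(0)=0,\quad Q>0\ \text{for}\ \varphi>0.
\]
Note that wherever $Q>0$ one has $\varphi'>0$, so monotonicity of the profile is automatic from positivity of $Q$.

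The heart of the matter — and the step I expect to be the main obstacle — is the local construction of a positive solution $Q$ near $\varphi=0$, where the right-hand side is neither Lipschitz nor bounded. This is where the hypothesis $k\neq0$ enters. Passing to the integrated form $Q(\varphi)=k\varphi+\int_0^\varphi bm\,s^{m+\beta-1}Q(s)^{-1/p}\,ds=:\mathcal{T}[Q](\varphi)$, I would observe that $mp>1$ forces $m+\beta>1/p$, so that once $Q(s)$ is bounded below by a multiple of $s$ the integrand is absolutely integrable at $s=0$ and $\mathcal{T}$ is well defined and compact on $C[0,\delta]$. Since $\mathcal{T}$ is order-reversing in $Q$, I would build explicit sub- and supersolutions from the expected leading term and apply Schauder's fixed-point theorem on the resulting order interval to obtain a positive fixed point, i.e.\ a local solution with the correct asymptotics. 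The dominant balance itself is dictated by the competition between the two right-hand terms: when $k>0$ and $m+\beta>1+1/p$ convection wins and $Q\sim k\varphi$, whereas when the absorption term is dominant near $\varphi=0$ one instead gets $Q\sim c\,\varphi^{(m+\beta)p/(p+1)}$; in each regime the same scheme applies once the barriers are chosen to match the prevailing leading term.

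Once $Q$ is secured on a small interval, I would continue it by standard theory as long as $Q>0$, obtaining a maximal positive solution, and then recover the profile by inverting the change of variables through $z(\varphi)=\int_0^\varphi m\,s^{m-1}Q(s)^{-1/p}\,ds$. The decisive point is that this integral \emph{converges} as its lower limit tends to $0$ exactly because $mp>1$: with $Q\sim k\varphi$ the integrand behaves like $s^{\,m-1-1/p}$, and $m-1-1/p>-1$ is equivalent to $mp>1$. This is precisely the slow-diffusion condition ensuring that the interface occurs at a finite $z$, which we normalize to $z=0$. Inverting $z\mapsto\varphi$ and setting $\varphi\equiv0$ for $z<0$ produces the candidate traveling wave.

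It remains to check that $\varphi$ is a genuine weak solution of \eqref{EQ1}. Since the flux equals $Q(\varphi)\to Q(0)=0$ as $z\to0^+$ and vanishes identically for $z<0$, it is continuous across the interface, no singular contribution arises there, and the weak formulation holds; nonnegativity, monotonicity, and $\varphi\to0^+$ at $-\infty$ are built into the construction. Finally, the refined interface asymptotics announced after ``Further, we have'' are read off directly from the leading behavior of $Q$ together with the inversion formula: in the convection-dominated regime $\varphi(z)\sim C\,z^{\,p/(mp-1)}$ as $z\to0^+$, while in the absorption-dominated regime $\varphi(z)\sim C\,z^{\,(p+1)/(mp-\beta)}$, with $C$ determined explicitly by $k,b,m,p,\beta$.
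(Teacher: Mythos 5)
Your reduction is the same one the paper uses: integrating \eqref{EQ3} once and passing to the profile as independent variable gives exactly the first-order singular problem \eqref{PP1} (your $Q(\varphi)$ is the paper's $\Upsilon(\Theta)$), and the recovery of $\varphi$ through $z(\varphi)=\int_0^\varphi m s^{m-1}\Upsilon^{-1/p}(s)\,ds$, with convergence at $0$ guaranteed by $mp>1$, is likewise the paper's route via \eqref{EQ5}--\eqref{EQ7}. Where you genuinely diverge is in how existence for the degenerate problem $\Upsilon(0)=0$ is established: the paper regularizes with the shifted data $\Upsilon(0)=\varepsilon$ and $\Upsilon(\varepsilon)=0$ (problems \eqref{PP2}, \eqref{PP3}), uses the nullcline $\widetilde{C}$ and comparison to get global bounds, and passes to a monotone limit, whereas you propose a direct Schauder fixed-point argument on an order interval for the order-reversing integral operator $\mathcal{T}$. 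Your scheme is clean and works for $k>0$ (the sub-barrier $k\varphi$ and super-barrier $\mathcal{T}[k\varphi]$ form an invariant order interval, and $m+\beta>1/p$ gives integrability and compactness), and it has the advantage of delivering the local asymptotics of $\Upsilon$ at $0$ simultaneously, which the paper instead extracts separately by nonlinear scaling in \cref{Lemma 3}. What your sketch buys less cheaply is the case $k<0$: there $k\varphi$ is not a positive subsolution, the barriers must be built from the nullcline or the pure-absorption balance, and global positivity of $Q$ requires the observation that a trajectory cannot cross $\widetilde{C}$ downward (it would need negative slope at a point where $f=0$); you flag this but do not carry it out, and it is precisely the part of the argument the paper devotes its phase-plane analysis to. Also note that for the sharp constants in the asymptotic formulas of \cref{Theorem 1} your barriers must be two-sided with matching leading coefficients; asserting that $C$ is ``determined explicitly'' requires that matching, which the scaling argument of \cref{Lemma 3} provides automatically. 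For the bare existence statement under review, your approach is sound.
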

\begin{enumerate}
\item $\lim\limits_{z \to 0^+} z^{-\frac{1+p}{mp-\beta}}\varphi(z) = \Big[\frac{b(mp-\beta)^{1+p}}{(m(1+p))^{p}p(m+\beta)}\Big]^{\frac{1}{mp-\beta}} := C_*$, if $p(m+\beta) < 1+p$,
\item $\lim\limits_{z \to +\infty} z^{-\frac{1+p}{mp-\beta}}\varphi(z) = C_*$, if $p(m+\beta) > 1+p$,
\item $\lim\limits_{z \to +\infty} z^{-\frac{p}{mp-1}}\varphi(z) = \Big(\frac{mp-1}{mp}\Big)^{\frac{p}{mp-1}}k^{\frac{1}{mp-1}}$, if $k > 0$, $p(m+\beta)<1+p$,
\item $\lim\limits_{z \to 0^+} z^{-\frac{p}{mp-1}}\varphi(z) = \Big(\frac{mp-1}{mp}\Big)^{\frac{p}{mp-1}}k^{\frac{1}{mp-1}}$, if $k > 0$, $p(m+\beta)>1+p$,
\item $\lim\limits_{z \to +\infty} z^{-\frac{1}{1-\beta}}\varphi(z) = \Big((1-\beta)\Big({-\frac{b}{k}}\Big)\Big)^{\frac{1}{1-\beta}}$, if $k < 0$, $p(m+\beta)<1+p$,
\item $\lim\limits_{z \to 0^+} z^{-\frac{1}{1-\beta}}\varphi(z) = \Big((1-\beta)\Big({-\frac{b}{k}}\Big)\Big)^{\frac{1}{1-\beta}}$, if $k < 0$, $p(m+\beta)>1+p$.
\end{enumerate}

The existence of traveling-wave solutions with interfaces for the nonlinear reaction-diffusion equation (\eqref{EQ1} with $p=1$) is pursued in \cite{HerreroVazquez1}. Existence of traveling-wave type solutions to \eqref{EQ1} for the parabolic $p$-Laplacian equation is considered in \cite{Mu1}.

It is of note that currently there is a well established general theory of nonlinear degenerate parabolic equations, beginning with \cite{Oleinik}; see also \cite{zeldovich,Barenblatt1,Vazquez2,Dibe-Sv,Abdulla6,Abdulla7,Abdulla8,Abdulla9,Abdulla11,Abdulla12,Abdulla14,Abdulla16,Abdulla18,Abdulla20,Abdulla21,Abdulla25,
Herrero_Vazquez,HerreroVazquez1,Kalashnikov1,Kalashnikov4,Kersner1,Vasquez1,Gala1,shmarev,Abdullaev1,Abdulla4,Abdullaev1,Barenblatt2,BCP,DibeHerrero2,Grundy1,GrundyPeletier,HerreroPierre,KPV,Leibenson,shmarev2015interfaces}). Boundary value problems for \eqref{EQ1} have been investigated in \cite{Kalashnikov2,Kalashnikov3,Esteban,Tsutsumi,Ishige,Degtyarev, Ivanov1, Ivanov2,Vespri1}.

Let
\begin{equation}\label{IF1}
u(x,0)=u_0(x), \, x\in \mathbb{R}.
\end{equation}
The solution of the Cauchy problem (CP) \eqref{EQ1}, \eqref{IF1} is understood in the weak sense (see Definition 1 from \cite{AbdullaPrinkey1}). The full classification of the interfaces
\[\eta(t):=\sup\{x:u(x,t)>0\}, \, \eta(0)=0,\]
and local solutions near the interfaces for the Cauchy problem \eqref{EQ1}, \eqref{IF1} is established in \cite{AbdullaPrinkey1} in the slow diffusion case ($mp>1$) and in \cite{AbdullaPrinkey2} for the fast diffusion case ($0<mp<1$). This classification is done for the nonlinear reaction-diffusion equation (\eqref{EQ1} with $p=1$) in \cite{Abdulla1} for the slow diffusion case and in \cite{Abdulla3} for the fast diffusion case; and for the parabolic $p$-Laplacian diffusion-reaction equation (\eqref{EQ1} with $m=1$) in \cite{AbdullaJeli} for the slow diffusion case and in \cite{AbdullaJeli2} for the fast diffusion case. The use of finite traveling-wave solutions was essential to prove asymtotic results for the interface and the local solution near the interface in the cases where diffusion and reaction forces are in balance.

The organization of the paper is as follows: in \cref{sec:TWS} we formulate and prove some preliminary results which are necessary for the proof of main result and in \cref{sec:main result} we prove the main result, \cref{Theorem 1}. \\
\section{Traveling-Wave Solutions and Phase-Space Analysis}
\label{sec:TWS}

In this section we'll apply phase-space analysis to find finite traveling-wave solutions for \eqref{EQ1}. We aim to analyze the phase portrait for problem \eqref{EQ3}. We establish an essential monotinicity property of $\varphi$.
\begin{lemma}\label{Lemma 1}
If $\varphi$ is a positive solution to \eqref{EQ3}, then $\varphi$ is increasing on (0,$+\infty$). 
\end{lemma}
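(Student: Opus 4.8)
The plan is to recast the second-order problem \eqref{EQ3} as a first-order statement about the flux and then run a sign-crossing argument. First I would introduce the flux variable $\psi := |(\varphi^m)'|^{p-1}(\varphi^m)'$, which records both the magnitude and the sign of $(\varphi^m)'$. Since $|(\varphi^m)'|^{p-1} \ge 0$ and, on the set where $\varphi > 0$, we have $(\varphi^m)' = m\varphi^{m-1}\varphi'$ with $m\varphi^{m-1} > 0$, the sign of $\psi$ coincides with the sign of $\varphi'$. Thus proving that $\varphi$ is increasing on $(0,\infty)$ is equivalent to proving $\psi > 0$ there.

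Next I would integrate the equation. Using the weak formulation together with the initial data $\varphi(0) = (\varphi^m)'(0) = 0$, so that $\psi(0) = 0$, integration of the identity $\psi' = k\varphi' + b\varphi^\beta$ from $0$ to $z$ yields the first integral
$$\psi(z) = k\varphi(z) + b\int_0^z \varphi^\beta(s)\,ds.$$
From this I would extract the key crossing property: at any interior point $z_0 > 0$ with $\varphi(z_0) > 0$ and $\psi(z_0) = 0$ one has $\varphi'(z_0) = 0$, and hence $\psi'(z_0) = k\varphi'(z_0) + b\varphi(z_0)^\beta = b\varphi(z_0)^\beta > 0$. In words, $\psi$ can vanish only at an upward crossing; it can never reach $0$ from above while $\varphi$ stays positive.

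With this property the conclusion follows by contradiction. Suppose $\psi(z_1) < 0$ for some $z_1 > 0$ and set $z_* := \sup\{z \in [0,z_1] : \psi(z) = 0\}$, which is well defined because $\psi(0) = 0$; then $\psi(z_*) = 0$ and $\psi < 0$ on $(z_*, z_1]$. If $z_* > 0$, then $\varphi(z_*) > 0$ and the crossing property forces $\psi > 0$ immediately to the right of $z_*$, a contradiction. If $z_* = 0$, then $\psi < 0$ just after $0$, so $\varphi$ is decreasing from $\varphi(0) = 0$ and becomes negative, contradicting $\varphi \ge 0$. Hence $\psi \ge 0$ on $(0,\infty)$, and a final use of the crossing property (a zero of $\psi$ at an interior positivity point would force $\psi < 0$ just to its left) upgrades this to $\psi > 0$, i.e.\ $\varphi' > 0$, so that $\varphi$ is strictly increasing on $(0,\infty)$.

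The main obstacle is not the sign bookkeeping but the regularity needed to run it rigorously: one must justify that the weak solution $\varphi$ is classically differentiable on the open set $\{\varphi > 0\}$, so that the pointwise identity $\psi'(z_0) = b\varphi(z_0)^\beta$ is legitimate, and that the first integral holds with $\psi$ absolutely continuous. This is where the slow-diffusion hypothesis $mp > 1$ and the nondegeneracy of the equation away from $\varphi = 0$ enter, through standard interior regularity for the (uniformly elliptic, hence ODE-regular) problem restricted to $\{\varphi > 0\}$.
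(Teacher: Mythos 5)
Your proof is correct, but it follows a genuinely different route from the paper's. The paper splits on the sign of $k$: for $k<0$ it rules out an interior local maximum by evaluating the equation at a critical point --- which is exactly your crossing observation that $\psi'=b\varphi^{\beta}>0$ wherever $\psi=0$ and $\varphi>0$ --- while for $k>0$ it introduces the energy function $\Phi=\frac{p}{p+1}|(\varphi^{m})'|^{p+1}-\frac{bm}{m+\beta}\varphi^{m+\beta}$, computes $\Phi'=\frac{k}{m}\varphi^{1-m}\big[(\varphi^{m})'\big]^{2}\geq 0$, and derives a contradiction at the first zero $z_0$ of $\varphi'$ from $0=\Phi(0)\leq\Phi(z_0)=-\frac{bm}{m+\beta}\varphi^{m+\beta}(z_0)<0$. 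Your argument is uniform in $k$: the first integral $\psi(z)=k\varphi(z)+b\int_0^z\varphi^{\beta}$ together with the upward-crossing property of the flux handles both signs with one mechanism, and your final upgrade step ($\psi=0$ at an interior positivity point would force $\psi<0$ just to its left) yields strict positivity of $\varphi'$ in both cases, something the paper's $k<0$ case leaves slightly implicit. What the energy function buys the paper is that it never has to examine the behavior at the origin, whereas your case $z_*=0$ must invoke continuity of $\varphi$ at $0$ and positivity on $(0,z_1]$ to reach the contradiction $\varphi\leq 0$; both of these are available under the lemma's hypotheses, so this is a matter of bookkeeping, not a gap. The regularity caveat you flag (pointwise validity of $\psi'=k\varphi'+b\varphi^{\beta}$ on $\{\varphi>0\}$) is equally needed for the paper's argument and is supplied by the phase-plane construction of Section 2, which exhibits $\varphi$ as a classical solution of a nondegenerate first-order ODE wherever $\varphi>0$.
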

\begin{proof}[Proof of \cref{Lemma 1}] 
If $k < 0$, the result easily follows since the solution to\eqref{EQ3} cannot obtain a local maximum. For $k>0$, the result follows as in the analogous proof for the $p$-Laplacian equation in \cite{Mu1} by choosing 
\[\Phi(z) := \frac{p}{p+1}|(\varphi^{m})'|^{p+1}-\frac{bm}{m+\beta}\varphi^{m+\beta}.\]
\end{proof}

Now, we want to show that there exists such a $\varphi(z) > 0$. We introduce the following change of variable
\[ \Theta = \varphi \text{ and } \Upsilon = ((\varphi^m)')^p,\]
it follows that
\[\Theta' = \frac{1}{m}\Theta^{1-m}\Upsilon^{\frac{1}{p}} \text{ and } \Upsilon' = b\Theta^\beta+\frac{k}{m}\Theta^{1-m}\Upsilon^{\frac{1}{p}},\]
where $(\Theta, \Upsilon)$ starts from (0,0) at $z = 0$, exists for any  $z \in \mathbb{R}^+$, and are contained in the first quadrant: $Q_1 = \{(\Theta,\Upsilon): \Theta, \Upsilon > 0\}$ for $z > 0$. We claim that there exists a unique solution, or trajectory, $\Upsilon(\Theta)$. 
Consider
\begin{equation}\label{PP1}\begin{cases}
\frac{d\Upsilon}{d\Theta} = f(\Theta,\Upsilon) = k + bm\Theta^{m+\beta-1}\Upsilon^{-\frac{1}{p}}, \\
\Upsilon(0) = 0.
\end{cases}\end{equation}

As done in \cite{Mu1} for the analogous problem for the $p$-Laplacian equation, we find the nontrivial trajectories, $\Upsilon(\Theta)$, to \eqref{PP1}, in two steps. First we prove the global existence of the solution of the following perturbed IVP
\begin{equation}\label{PP2}\begin{cases}
\frac{d\Upsilon}{d\Theta} = f(\Theta,\Upsilon) = k + bm\Theta^{m+\beta-1}\Upsilon^{-\frac{1}{p}}, \\
\Upsilon(0) = \varepsilon, \, \varepsilon > 0.
\end{cases}\end{equation}
Since $f(\Theta,\Upsilon)$ is locally Lipshitz continuous in $\mathbb{R}^+ \times (\varepsilon, +\infty)$, there exists a unique local solution to \eqref{PP2}, $\Upsilon_\varepsilon$. For $k>0$ and for $k<0$ with $p(m+\beta)>1+p$, the proof of the existence of a global solution to \eqref{PP2} follows as in the proof of the existence of a global solution to the analgous IVP for the $p$-Laplacian equation in \cite{Mu1}. \\
 $f(\Theta,\Upsilon)$ is strictly increasing and satisfies the following inequality
\[\frac{d\Upsilon_\varepsilon}{d\Theta} \leq k + bm\Theta^{m+\beta-1}\varepsilon^{-\frac{1}{p}},\]
so it follows that
\[\Upsilon_\varepsilon \leq k\Theta + \frac{bm}{m+\beta}\Theta^{m+\beta}\varepsilon^{-\frac{1}{p}} + \varepsilon,\]
hence, $\Upsilon_\varepsilon$ is a global solution. 
Let $k>0$ and $p(m+\beta)>1+p$.
Let $p(m+\beta)>1+p$. For $k < 0$, define the curve 
\[\widetilde{C} : \Upsilon(\Theta) = \bigg(-\frac{k}{bm}\Theta^{1-m-\beta}\bigg)^{-p},\]
then we have $f(\Theta,\Upsilon) = 0$ on $\widetilde{C}$ and $\widetilde{C}$ divides the first quadrant, $\Omega_1$, into two regions: $R_l = \{(\Theta,\Upsilon): f(\Theta,\Upsilon) < 0\}$ and $R_r = \{(\Theta,\Upsilon): f(\Theta,\Upsilon)>0\}$, see \cref{fig:1}. $\Upsilon_\varepsilon$ starts in region $R_l$, then $\Upsilon_\varepsilon$ must cross $\widetilde{C}$ at some point with horizontal tangent and after $\Upsilon_\varepsilon$ lies in the region $R_r$, where $\Upsilon_\varepsilon$ is strictly increasing. Hence there exists $\delta_\varepsilon>0$ such that $\Upsilon_\varepsilon$ attains its minimum, $M_\varepsilon$: $\Upsilon_\varepsilon(\delta_\varepsilon) = M_\varepsilon$, which lies on $\widetilde{C}$ and is strictly positive. So we have
\[\frac{d\Upsilon_\varepsilon}{d\Theta} \leq k + bm\Theta^{m+\beta-1}M_\varepsilon^{-\frac{1}{p}},\]
so it follows that
\[\Upsilon_\varepsilon(\Theta) \leq k(\Theta-\delta_\varepsilon) + \frac{bm}{m+\beta}(\Theta^{m+\beta}-\delta_{\varepsilon}^{m+\beta})M_\varepsilon^{-\frac{1}{p}} + M_\varepsilon.\]
Let $k<0$ with $p(m+\beta)<1+p$. The difference from the previous case is that
\[\widetilde{C}: (0,+\infty)\to(+\infty,0),\]
see \cref{fig:2}. Since
\[\frac{d\Upsilon_\varepsilon}{d\Theta} > 0 \text { if } \Upsilon_{\varepsilon} < \bigg(-\frac{k}{bm}\Theta^{1-m-\beta}\bigg)^{-p},\] 
$\Upsilon_\varepsilon$ is increasing to the left of $\widetilde{C}$. Then $\Upsilon_{\varepsilon}$ must cross $\widetilde{C}$ with horizontal tangent, after that $\Upsilon_\varepsilon$ will be strictly decreasing. It follows that $\Upsilon_\varepsilon$ is a global solution to \eqref{PP2} if $k<0$.\\
Next we prove the global existence of the CP
\begin{equation}\label{PP3}\begin{cases}
\frac{d\Upsilon}{d\Theta} = f(\Theta,\Upsilon) = k + bm\Theta^{m+\beta-1}\Upsilon^{-\frac{1}{p}}, \\
\Upsilon(\varepsilon) = 0, \, \varepsilon > 0.
\end{cases}\end{equation}
To do this, we consider the following CP for the inverse function of $\Upsilon$, denoted as $v$
\begin{equation}\label{CP1}\begin{cases}
\frac{dv}{dt} = g(v,t) = \frac{1}{f(v,t)} = \frac{t^\frac{1}{p}}{kt^\frac{1}{p} + bmv^{m+\beta-1}}, \\
v(0) = \varepsilon, \, \varepsilon > 0. 
\end{cases}\end{equation}
Since the right hand side of \eqref{CP1} is Lipshitz continuous, there exists a local solution, $v_\varepsilon$, to the CP \eqref{CP1}. For $k>0$ and for $k<0$ with $p(m+\beta)>1+p$, as for \eqref{PP2},  the proof of the existence of a global solution to \eqref{PP3} follows as in the proof of the existence of a global solution to the analgous IVP for the $p$-Laplacian equation in \cite{Mu1}.
We have the following inequality
\[0 \leq \frac{dv_\varepsilon}{dt} \leq \frac{1}{k},\]
it follows that $v_\varepsilon$ is a global solution to the CP. Let $p(m+\beta)>1+p$. For $k < 0$ we denote $\widetilde{C}$ as the curve where $f(v_\varepsilon,t) = 0$. Then, as before, $\widetilde{C}$ divides $\Omega_1$ into two regions: $R_l = \{(v,t): f(v,t) > 0\}$ and $R_r = \{(v,t): f(v,t) < 0\}$, see \cref{fig:3}. $v_\varepsilon$ starts in region $R_l$ and $\frac{dv_\varepsilon}{dt}$ is strictly positive and tends to $+\infty$ as $f(v_\varepsilon,t) \rightarrow 0^+$. It follows that $v_\varepsilon$ is strictly increasing and never touches $\widetilde{C}$. Therefore, $v_\varepsilon$ is a global solution to the CP. Moreover, we have that
\[\lim_{t \to +\infty} v_\varepsilon(t) = +\infty.\]
Hence, $v_\varepsilon$ is one-to-one from $[0,+\infty)$ to $[\varepsilon, +\infty)$. Now, let $w_\varepsilon$ denote the inverse function of $v_\varepsilon$, defined from $[\varepsilon, +\infty)$ to $[0,+\infty)$. Clearly, $w_\varepsilon$ satisfies the following CP
\begin{equation}\label{CP2}\begin{cases}
\frac{dw_\varepsilon}{d\Theta} = f(\Theta,w) = k + bm\Theta^{m+\beta-1}w^{-\frac{1}{p}}, \\
w_\varepsilon(\varepsilon) = 0, \, \varepsilon > 0.
\end{cases}\end{equation}
Therefore, the CP \eqref{PP3} has a unique global solution for any $\varepsilon > 0$. Now, let $k<0$ with $p(m+\beta)<1+p$.  As before, we define the curve where $f(v,t)=0$ by $\widetilde{C}$. We denote the region to the left of $\widetilde{C}$ as $R_l = \{(v,t): f(v,t) > 0\}$ and to the region to the right of $\widetilde{C}$ as $R_r = \{(v,t): f(v,t) < 0\}$, see \cref{fig:4}. Since $v$ is increasing in $R_l$ it must cross $\widetilde{C}$ with vertical tangent, however, this is impossible. Let $t_\varepsilon$ be such that $v(t_\varepsilon) = M_\varepsilon \, \in \widetilde{C}$. Consider the function $w$ such that
\[w: [\varepsilon, M_\varepsilon] \to [0, t_\varepsilon]. \]
Then $w$ is the inverse function of $v$ in $[0, t_\varepsilon]$ and so solves the following problem
\begin{equation}\label{WINVERSE}\begin{cases}
\frac{dw}{dt} =  k + bmw^{m+\beta-1}t^{-\frac{1}{p}} = f (w,t), \\
w(\varepsilon) = 0, \, w(M_\varepsilon) = t_\varepsilon, \, \varepsilon > 0.
\end{cases}\end{equation}
Let $\widehat{C}$ denote the curve where $f(w,t)=0$. So $w$ enters the region to the right of $\widehat{C}$ with horizontal tangent and since if $t > M_\varepsilon$, then $w(t)$ is decreasing, we have that $w$ cannot cross $\widehat{C}$ again since it must cross with horizontal tangent, which is a contradiction. It follows that the solution, $w$, to problem \eqref{WINVERSE} is global and so there exists a global solution to problem \eqref{PP3} if $k<0$.

\begin{lemma}\label{Lemma 2}
The problem \eqref{PP1} has a unique global solution. 
\end{lemma}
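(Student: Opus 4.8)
The plan is to obtain the solution of the singular problem \eqref{PP1} as the common limit, as $\varepsilon\to0^+$, of the two families of global solutions already constructed: the solutions $\Upsilon_\varepsilon$ of the lifted problem \eqref{PP2} and the solutions $w_\varepsilon$ of the shifted problem \eqref{PP3}. The only obstruction to applying classical theory is the corner $(\Theta,\Upsilon)=(0,0)$, where $f$ is singular through the factor $\Upsilon^{-\frac{1}{p}}$; on the open region $\Upsilon>0$ the field $f$ is locally Lipschitz, so local existence, uniqueness, and the non-crossing of trajectories all hold there and will be used throughout.

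First I would record two monotonicity properties coming from non-crossing of trajectories. Because $\Upsilon_{\varepsilon_1}(0)=\varepsilon_1<\varepsilon_2=\Upsilon_{\varepsilon_2}(0)$ forces $\Upsilon_{\varepsilon_1}<\Upsilon_{\varepsilon_2}$, the family $\Upsilon_\varepsilon$ decreases as $\varepsilon\to0^+$ and the pointwise limit $\Upsilon_0:=\lim_{\varepsilon\to0^+}\Upsilon_\varepsilon\ge0$ exists. Because $w_{\varepsilon_1}(\varepsilon_2)>0=w_{\varepsilon_2}(\varepsilon_2)$ for $\varepsilon_1<\varepsilon_2$ forces $w_{\varepsilon_1}>w_{\varepsilon_2}$ on $[\varepsilon_2,+\infty)$, the family $w_\varepsilon$ increases as $\varepsilon\to0^+$ and the limit $W_0:=\lim_{\varepsilon\to0^+}w_\varepsilon$ exists. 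Finally, comparing the two families at $\Theta=\varepsilon$, where $w_\varepsilon(\varepsilon)=0<\Upsilon_{\varepsilon'}(\varepsilon)$, yields $w_\varepsilon\le\Upsilon_{\varepsilon'}$ on $[\varepsilon,+\infty)$ and hence $W_0\le\Upsilon_0$.

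Next I would check that $\Upsilon_0$ is a global solution of \eqref{PP1}. The initial condition is immediate: $\Upsilon_0(0)=\inf_{\varepsilon>0}\varepsilon=0$. Positivity on $(0,+\infty)$ follows from the lower barriers, since for any $\Theta>0$ and $\varepsilon<\Theta$ we have $\Upsilon_0(\Theta)\ge W_0(\Theta)\ge w_\varepsilon(\Theta)>0$. On each compact $[\delta,L]\subset(0,+\infty)$ the limit $\Upsilon_0$ is trapped between two positive constants, the convergence $\Upsilon_\varepsilon\to\Upsilon_0$ is monotone and hence uniform by Dini's theorem, and $f$ is Lipschitz there, so I can pass to the limit in the integral identity $\Upsilon_\varepsilon(\Theta)=\Upsilon_\varepsilon(\delta)+\int_\delta^\Theta f(s,\Upsilon_\varepsilon(s))\,ds$ and conclude that $\Upsilon_0$ solves the ODE on $(0,+\infty)$. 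Globality is automatic, since $\Upsilon_0\le\Upsilon_\varepsilon<+\infty$ at every $\Theta$ for each fixed $\varepsilon$.

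The main work, and the main obstacle, is to show that the upper and lower limits coincide, $\Upsilon_0=W_0$, which simultaneously delivers uniqueness. Here I would use the fact that $f$ is strictly decreasing in $\Upsilon$, since $\partial_\Upsilon f=-\frac{bm}{p}\Theta^{m+\beta-1}\Upsilon^{-\frac{1}{p}-1}<0$. On $[\varepsilon,+\infty)$ the gap $D:=\Upsilon_\varepsilon-w_\varepsilon$ is positive (non-crossing) and satisfies $D'=f(\Theta,\Upsilon_\varepsilon)-f(\Theta,w_\varepsilon)<0$, so $D$ is nonincreasing and $0<D(\Theta)\le D(\varepsilon)=\Upsilon_\varepsilon(\varepsilon)$. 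A direct estimate of \eqref{PP2} near $\Theta=0$, using $p(m+\beta)>1$ (which always holds since $mp>1$ and $\beta>0$), gives $\Upsilon_\varepsilon(\varepsilon)\to0$; hence $\sup_{\Theta\ge\varepsilon}(\Upsilon_\varepsilon-w_\varepsilon)\to0$ and therefore $\Upsilon_0=W_0$. Uniqueness then follows by squeezing: any nontrivial solution $\Upsilon$ of \eqref{PP1} satisfies $\Upsilon(0)=0<\Upsilon_\varepsilon(0)$ and $\Upsilon(\varepsilon)>0=w_\varepsilon(\varepsilon)$, so non-crossing gives $w_\varepsilon\le\Upsilon\le\Upsilon_\varepsilon$, and letting $\varepsilon\to0^+$ forces $W_0\le\Upsilon\le\Upsilon_0$, i.e. $\Upsilon=\Upsilon_0$. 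The delicate point throughout is precisely this collapse of the gap at the singular corner $(0,0)$, and it is the monotonicity $\partial_\Upsilon f<0$ together with $\Upsilon_\varepsilon(\varepsilon)\to0$ that resolves it.
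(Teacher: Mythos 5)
Your argument is correct, and while the existence half runs parallel to the paper's, the uniqueness half takes a genuinely different route. The paper (following the $p$-Laplacian treatment in the cited reference) builds the solution exactly as you do for existence --- as the decreasing limit of the lifted solutions $\Upsilon_\varepsilon$ of \eqref{PP2}, with positivity on $(0,+\infty)$ supplied by comparison with the shifted solutions of \eqref{PP3}, and the ODE recovered by passing to the limit in an integral identity. For uniqueness, however, the paper argues directly: given two solutions, it looks at the supremum $M$ of the interval of agreement, applies the mean value theorem to their difference just beyond $M$, and derives a sign contradiction from $\partial_\Upsilon f<0$. You instead identify the maximal and minimal solutions $\Upsilon_0=\lim\Upsilon_\varepsilon$ and $W_0=\lim w_\varepsilon$, show the gap $D=\Upsilon_\varepsilon-w_\varepsilon$ is nonincreasing (again from $\partial_\Upsilon f<0$), and collapse it via the quantitative estimate $D\le\Upsilon_\varepsilon(\varepsilon)\to0$, which hinges on $p(m+\beta)>1$; any solution is then squeezed between the two families. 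Both arguments rest on the same monotonicity of $f$ in $\Upsilon$, but yours buys a little more: it localizes the entire difficulty at the singular corner $(0,0)$ and yields uniqueness and the identification $\Upsilon_0=W_0$ in one stroke, at the price of needing the explicit decay of $\Upsilon_\varepsilon(\varepsilon)$ (for $k<0$ this should be extracted from $\frac{d}{d\Theta}\Upsilon_\varepsilon^{\frac{1+p}{p}}\le\frac{(1+p)bm}{p}\Theta^{m+\beta-1}$ rather than from the bound $\Upsilon_\varepsilon\ge\varepsilon$, which may fail there). Two small points to tighten: your appeal to Dini's theorem presupposes continuity of the limit, so it is cleaner to get uniform convergence on $[\delta,L]$ from the uniform bound on $f(\Theta,\Upsilon_\varepsilon)$ there via Arzel\`a--Ascoli; and you should note explicitly that $\lim_{\Theta\to0^+}\Upsilon_0(\Theta)=0$, which follows from $\Upsilon_0(\Theta)\le\Upsilon_\Theta(\Theta)\to0$ by the same estimate you already use.
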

The proof of Lemma \ref{Lemma 2} follows as in the proof of existence and uniqueness of solution for the analogous problem for the $p$-Laplacian equation in \cite{Mu1}.
\newpage
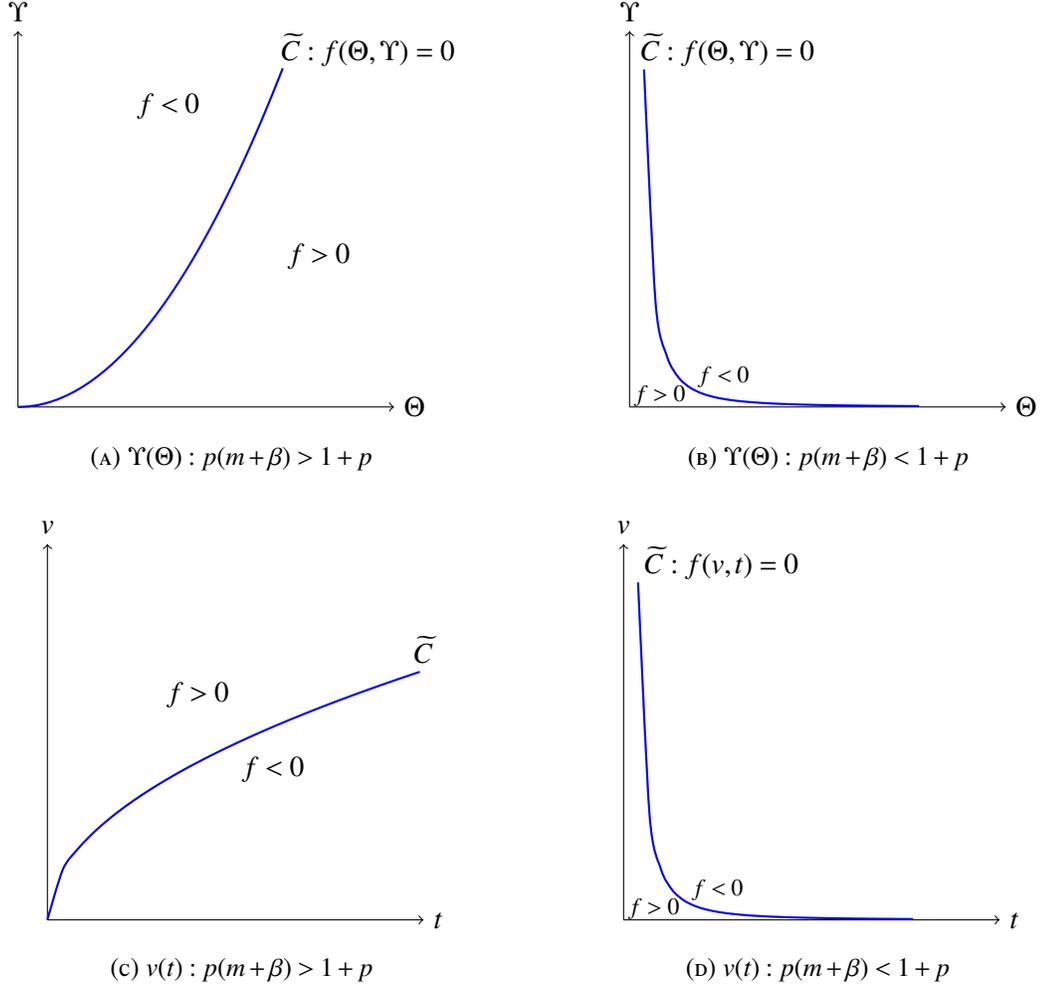
\begin{figure}[H]
        \centering
        \begin{subfigure}[b]{0.475\textwidth}
            \centering
\begin{tikzpicture}
   \draw[->] (0.,0) -- (5,0) node[right] {$\Theta$};
   \draw[->] (0,0) -- (0,5) node[above] {$\Upsilon$};
   \draw[scale=0.55,domain=0.0:6.4,smooth, variable=\x,blue, thick] plot ({\x},{(0.2)*((\x)*(\x))});
\node at (4.65,4.75) {$\widetilde{C}: f(\Theta,\Upsilon)=0$};
\node at (4, 2) {$f>0$};
\node at (2,4) {{$f<0$}};
\end{tikzpicture}
\caption[1]%
                       {{\small $\Upsilon(\Theta): p(m+\beta)>1+p$}}    
            \label{fig:1}
        \end{subfigure}
        \hfill
        \begin{subfigure}[b]{0.475\textwidth}  
            \centering 
\begin{tikzpicture}
   \draw[->] (0.,0) -- (5,0) node[right] {$\Theta$};
   \draw[->] (0,0) -- (0,5) node[above] {$\Upsilon$};
   \draw[scale=0.55,domain=0.35:7,smooth, variable=\x,blue, thick] plot ({\x},{1/((\x)*(\x))});
\node at (1.30,4.75) {$\widetilde{C}: f(\Theta,\Upsilon)=0$};
\node at (1.25, 0.40) {\footnotesize$f<0$};
\node at (0.40,0.15) {\footnotesize{$f>0$}};
\end{tikzpicture}
\caption[2]%
                        {{\small $\Upsilon(\Theta): p(m+\beta)<1+p$}}    
            \label{fig:2}
        \end{subfigure}
        \vskip\baselineskip
        \begin{subfigure}[b]{0.475\textwidth}   
            \centering 
\begin{tikzpicture}
   \draw[->] (0.,0) -- (5,0) node[right] {$t$};
   \draw[->] (0,0) -- (0,5) node[above] {$v$};
   \draw[scale=0.55,domain=0.0:9,smooth, variable=\x,blue, thick] plot (\x,{sqrt(4*\x)});
\node at (5,3.6) {$\widetilde{C}$};
\node at (3, 2) {$f<0$};
\node at (2,3) {{$f>0$}};
\end{tikzpicture}
\caption[3]%
                       {{\small $v(t): p(m+\beta)>1+p$}}    
            \label{fig:3}
        \end{subfigure}
        \quad
        \begin{subfigure}[b]{0.475\textwidth}   
            \centering 
\begin{tikzpicture}
   \draw[->] (0.,0) -- (5,0) node[right] {$t$};
   \draw[->] (0,0) -- (0,5) node[above] {$v$};
   \draw[scale=0.55,domain=0.35:7,smooth, variable=\x,blue, thick] plot ({\x},{1/((\x)*(\x))});
\node at (1.30,4.75) {$\widetilde{C}: f(v,t)=0$};
\node at (1.25, 0.40) {\footnotesize$f<0$};
\node at (0.40,0.15) {\footnotesize{$f>0$}};
\end{tikzpicture}
\caption[4]%
                       {{\small $v(t): p(m+\beta)<1+p$}}    
            \label{fig:4}
        \end{subfigure}
\caption[ The average and standard deviation of critical parameters ]
               {\small Trajectories $\Upsilon(\Theta)$ and $v(t)$} 
        \label{fig:trajectories}
    \end{figure}

Let $\Upsilon = ((\varphi^m)')^p$ be a solution of the problem \eqref{PP1}. For the problem
\begin{equation}\label{EQ5}
\frac{d\varphi}{dz} =\frac{1}{m}(\varphi(z))^{1-m} \Upsilon^{\frac{1}{p}}(\varphi(z)), \, \varphi(0) = 0,
\end{equation}

there exists a unique maximal solution defined on $(-\infty, \varrho)$ such that
\[\lim_{z \to \varrho^-} \varphi(z) = +\infty.\]
By \eqref{EQ5} we have that $(\varphi^m)'(0) = \Upsilon^{\frac{1}{p}}(0) = 0$,
so we can continue $\varphi$ by zero on $(-\infty,0)$. On the other side, $\varphi$ is strictly increasing, and
\[\lim_{z \to \varrho^-} \varphi(z) = +\infty,\]
if $\varrho$ is finite. By \eqref{EQ5} and the boundedness of $\Upsilon^{-\frac{1}{p}}$, the above limit also holds if $\varrho = +\infty$.

The solution of \eqref{EQ5} defined on $(-\infty,\varrho)$ satisfies

\begin{equation}\label{EQ6}\begin{cases}
\big(|(\varphi^{m})'|^{p-1}(\varphi^{m})'\big)'- k\varphi'-b\varphi^{\beta} = 0, \text{ on } (-\infty, \varrho),  \\
\varphi(0) = (\varphi^m)'(0) = 0.
\end{cases}\end{equation}

The solution to \eqref{EQ6} is global. To prove it, we will need the following result.\\

\begin{lemma}\label{Lemma 3}
Let $\Upsilon$ be a solution of the problem \eqref{PP1}, then
\end{lemma}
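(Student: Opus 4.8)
The plan is to read the growth of the trajectory $\Upsilon(\Theta)$ as $\Theta\to+\infty$ directly off the scalar equation \eqref{PP1}, and then convert that growth into the statement that the maximal solution of \eqref{EQ6} is global. The bridge is the separated form of \eqref{EQ5}: integrating $d\varphi/dz=\tfrac{1}{m}\varphi^{1-m}\Upsilon^{1/p}(\varphi)$ gives $z=\int_0^{\varphi(z)} m\,\Theta^{m-1}\Upsilon^{-1/p}(\Theta)\,d\Theta$, so $\varrho=+\infty$ precisely when $\int^{+\infty}\Theta^{m-1}\Upsilon^{-1/p}(\Theta)\,d\Theta=+\infty$. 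Thus it suffices to produce, for large $\Theta$, an upper bound $\Upsilon(\Theta)\le C\,\Theta^{Q}$ with exponent $Q\le mp$, since then the integrand is bounded below by a multiple of $\Theta^{m-1-Q/p}\ge\Theta^{-1}$ and the integral diverges.

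First I would treat $k>0$. Because $\Upsilon^{-1/p}>0$, \eqref{PP1} gives $\Upsilon'>k$, hence $\Upsilon(\Theta)>k\Theta$; substituting this lower bound back yields $\Upsilon'(\Theta)\le k+bm\,k^{-1/p}\Theta^{m+\beta-1-1/p}$, which integrates to a polynomial bound. When $p(m+\beta)<1+p$ the exponent $m+\beta-1/p$ is below $1$, so $\Upsilon(\Theta)\le C\Theta$ and $Q=1<mp$. When $p(m+\beta)>1+p$ this crude bound is too weak, and I would instead use a power-law barrier: setting $\bar\Upsilon=A\Theta^{q}$ with $q=\tfrac{p(m+\beta)}{p+1}$ makes both sides of \eqref{PP1} scale like $\Theta^{q-1}$, and choosing $A\ge(bm/q)^{p/(p+1)}$ turns $\bar\Upsilon$ into a supersolution for large $\Theta$; since $f$ is decreasing in $\Upsilon$, the comparison principle then gives $\Upsilon\le A\Theta^{q}$, and one checks $q\le mp\iff\beta\le mp$, which holds as $\beta<1<mp$. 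The same power-law supersolution also handles $k<0$ with $p(m+\beta)>1+p$, where the convection term is subdominant for large $\Theta$.

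The delicate regime is $k<0$ with $p(m+\beta)<1+p$, where $f$ vanishes on the null-isocline $\widetilde C:\Upsilon=\big(-\tfrac{k}{bm}\Theta^{1-m-\beta}\big)^{-p}$ and the factor $\Upsilon^{-1/p}$ degenerates as $\Upsilon\downarrow 0$. Here I would exploit the phase portrait already established in \cref{fig:2,fig:4}: after its single crossing of $\widetilde C$ the trajectory is trapped on one side with a fixed sign of monotonicity, so it must track $\widetilde C$ to leading order, giving $\Upsilon(\Theta)\asymp\big(-bm/k\big)^{p}\Theta^{p(m+\beta-1)}$. I would make this precise with sub/supersolutions $A_\pm\Theta^{p(m+\beta-1)}$ confining the trajectory for large $\Theta$, using that $f$ is monotone in $\Upsilon$ and that a second crossing of $\widetilde C$ is impossible (any crossing requires a horizontal tangent, the same obstruction used in the phase-plane analysis for \eqref{PP2} and \eqref{PP3}). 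Since $p(m+\beta-1)<1$, the integrand is then controlled by $\Theta^{-\beta}$ with $\beta<1$.

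Finally I would assemble the cases: in each one the integrand $\Theta^{m-1}\Upsilon^{-1/p}$ is bounded below for large $\Theta$ by a constant times either $\Theta^{-1}$ (diffusion/convection or diffusion/absorption balance) or $\Theta^{-\beta}$ (convection/absorption balance), and both improper integrals diverge. This yields $\int^{+\infty}\Theta^{m-1}\Upsilon^{-1/p}(\Theta)\,d\Theta=+\infty$, hence $\varrho=+\infty$ and the global existence of the solution to \eqref{EQ6}. The principal obstacle is the $k<0$, $p(m+\beta)<1+p$ case: unlike the monotone $k>0$ situation there is no elementary one-sided bound, and one must confine the trajectory near the isocline $\widetilde C$ precisely where the nonlinearity $\Upsilon^{-1/p}$ is worst behaved, which is where the barrier construction and the no-second-crossing argument do the real work.
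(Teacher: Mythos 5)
Your proposal does not prove \cref{Lemma 3}; it proves the sentence that \emph{precedes} it in the paper, namely that the maximal solution of \eqref{EQ6} is global. \cref{Lemma 3} is the list of six precise asymptotic equivalences for the trajectory $\Upsilon(\Theta)$ with explicit limiting constants: $\Upsilon(\Theta)\sim\big[\tfrac{bm(1+p)}{p(m+\beta)}\big]^{p/(1+p)}\Theta^{p(m+\beta)/(1+p)}$, $\Upsilon(\Theta)\sim k\Theta$, and $\Upsilon(\Theta)\sim\big(-\tfrac{k}{bm}\big)^{-p}\Theta^{p(m+\beta-1)}$, each in the appropriate parameter regime and each either as $\Theta\to 0^+$ or as $\Theta\to+\infty$. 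Your barrier construction yields only two-sided order-of-magnitude bounds $A_-\Theta^{q}\le\Upsilon(\Theta)\le A_+\Theta^{q}$ for large $\Theta$, with $A_-\neq A_+$ in general; that is enough to conclude $\int^{+\infty}\Theta^{m-1}\Upsilon^{-1/p}(\Theta)\,d\Theta=+\infty$ and hence $\varrho=+\infty$, but it does not identify the constants, and those constants are precisely what the proof of \cref{Theorem 1} consumes: after the substitution $z=m\int_0^{\varphi(z)}\Theta^{m-1}\Upsilon^{-1/p}(\Theta)\,d\Theta$ they become $C_*$ and the other limits in the main theorem. A two-sided bound with distinct constants would only give $\liminf$ and $\limsup$ estimates there, not the claimed limits. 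Moreover, you address only $\Theta\to+\infty$; cases (1), (4) and (6) of the lemma concern $\Theta\to 0^+$, which governs the local behavior of $\varphi$ near the interface $z=0^+$ and is entirely absent from your argument.

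The paper closes exactly this gap with a nonlinear rescaling (blow-up/blow-down) argument: one sets $\Upsilon_l(\Theta)=l\Upsilon(l^{\gamma}\Theta)$ and chooses $\gamma$ so that precisely one dominant balance in \eqref{PP1} survives the limit $l\to 0^+$ or $l\to+\infty$ (namely $\gamma=-\tfrac{1+p}{p(m+\beta)}$, $\gamma=-1$, or $\gamma=-\tfrac{1}{p(m+\beta-1)}$ for the diffusion--absorption, convection, and convection--absorption balances), proves uniform boundedness and equicontinuity (or monotonicity in $l$) of the family $\{\Upsilon_l\}$ to extract a limit $\widetilde{\Upsilon}$, identifies $\widetilde{\Upsilon}$ as the explicit solution of the reduced equation, and then unscales to obtain $\lim\Upsilon(\Theta)/\Theta^{q}$ with the exact constant. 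If you wish to salvage your approach you would need to upgrade your barriers to $(1\pm\varepsilon)$ times the exact limiting profile for every $\varepsilon>0$ and show the trajectory eventually enters and remains between them, and to repeat the whole analysis near $\Theta=0^+$; in the balanced cases this essentially amounts to redoing the scaling argument.
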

\begin{enumerate}
   \item $\Upsilon(\Theta) \sim \Big[\frac{bm(1+p)}{p(m+\beta)}\Big]^{\frac{p}{1+p}}\Theta^{\frac{p(m+\beta)}{1+p}}$, as $\Theta \to 0^+$, if $p(m+\beta) < 1+ p$,
   \item $\Upsilon(\Theta) \sim \Big[\frac{bm(1+p)}{p(m+\beta)}\Big]^{\frac{p}{1+p}}\Theta^{\frac{p(m+\beta)}{1+p}}$, as $\Theta \to +\infty$, if $p(m+\beta) > 1+ p$,
   \item $\Upsilon(\Theta) \sim k\Theta$, as $\Theta \to +\infty$, if $k>0$, $p(m+\beta) < 1+p$,
   \item $\Upsilon(\Theta) \sim k\Theta$, as $\Theta \to 0^+$, if $k>0$, $p(m+\beta) > 1 + p$,
   \item $\Upsilon(\Theta) \sim \big(-\frac{k}{bm}\big)^{-p}\Theta^{p(m+\beta-1)}$, as $\Theta \to +\infty$, if $k<0$, $p(m+\beta) < 1 + p$,
   \item $\Upsilon(\Theta) \sim \big(-\frac{k}{bm}\big)^{-p}\Theta^{p(m+\beta-1)}$, as $\Theta \to 0^+$, if $k<0$, $p(m+\beta) > 1 + p$.
\end{enumerate}

\begin{proof}[Proof of \cref{Lemma 3}]
We begin by proving formulas (1) and (2). We apply nonlinear scaling as follows: we choose $\Upsilon_l(\Theta) = l\Upsilon(l^{\gamma}\Theta)$, with $l > 0$ and $\gamma$ to be determined. 
\begin{center}
$\Upsilon_l(\Theta) = l\Upsilon(l^{\gamma}\Theta) \iff \Upsilon(\Theta) = l^{-1}\Upsilon_l(l^{-\gamma}\Theta)$.
\end{center}
We set $Z = l^{\gamma}\Theta$. It follows from \eqref{PP1} that
\begin{align}
\frac{d\Upsilon_l}{d\Theta} = l^{1+\gamma}\frac{d\Upsilon}{dZ} = l^{1+\gamma}\Big(k+bmZ^{m+\beta-1}\Upsilon^{-\frac{1}{p}}\Big) \\
= kl^{1+\gamma}+bml^{1+\gamma}l^{\gamma(m+\beta-1)}l^{\frac{1}{p}}\Theta^{m+\beta-1}\Upsilon_l^{-\frac{1}{p}}. \nonumber
\end{align} 
We choose $\gamma$ such that
\begin{center}
$1+\gamma+\gamma(m+\beta-1)+\frac{1}{p} = 0 \implies \gamma = -\frac{1+p}{p(m+\beta)}$.
\end{center}
So we have that
\begin{equation}\label{Scale2}
\frac{d\Upsilon_l}{d\Theta} = kl^{\frac{p(m+\beta)-(1+p)}{p(m+\beta)}}+bm\Theta^{m+\beta-1}\Upsilon_l^{-\frac{1}{p}}.
\end{equation}
From our previous results we that there exists a unique solution to \eqref{Scale2}. To prove formula 1, since $p(m+\beta)<1+p$, we set
\begin{center}
$$\lim_{l \to +\infty} \Upsilon_l(\Theta) = \widetilde{\Upsilon}(\Theta),$$
\end{center}
where $\widetilde{\Upsilon}(\Theta)$ solves
\begin{equation}\label{Scale3}\begin{cases}
\frac{d\Upsilon}{d\Theta} = bm\Theta^{m+\beta-1}\Upsilon^{-\frac{1}{p}}, \\
\Upsilon(0) = 0.
\end{cases}\end{equation}
The existence of the above limit follows from a similar argument used to prove an analogous limit in the proof of formula (3).
The ODE in \eqref{Scale3} is separable. Separating variables and integrating we have that
\begin{equation}\label{Scale4}
\widetilde{\Upsilon}(\Theta) = \Bigg[\frac{bm(1+p)}{p(m+\beta)}\Bigg]^{\frac{p}{1+p}}\Theta^{\frac{p(m+\beta)}{1+p}}.
\end{equation}
Recall that $Z = l^{\gamma}\Theta \implies \Theta = l^{-\gamma}Z$. So we have that
\begin{center}
$\Theta^{\frac{p(m+\beta)}{1+p}} = l^{-\frac{\gamma p(m+\beta)}{1+p}}Z^{\frac{p(m+\beta)}{1+p}} = lZ^{\frac{p(m+\beta)}{1+p}}$.
\end{center}
It follows that
\begin{align}
\lim_{l \to +\infty} \Upsilon_l(\Theta) = \lim_{l \to +\infty} l\Upsilon(Z) = \widetilde{\Upsilon}(\Theta)  \nonumber\\
= \Bigg[\frac{bm(1+p)}{p(m+\beta)}\Bigg]^{\frac{p}{1+p}}lZ^{\frac{p(m+\beta)}{1+p}} \nonumber\\
\implies \lim_{Z \to 0^+} \frac{\Upsilon(Z)}{Z^{\frac{p(m+\beta)}{1+p}}} = \Bigg[\frac{bm(1+p)}{p(m+\beta)}\Bigg]^{\frac{p}{1+p}}. \nonumber
\end{align}
Therefore,
\begin{center}
$\Upsilon(\Theta) \sim \Big[\frac{bm(1+p)}{p(m+\beta)}\Big]^{\frac{p}{1+p}}\Theta^{\frac{p(m+\beta)}{1+p}}, \text { as } \Theta \to 0^+$.
\end{center}
Note that formula (2), where $p(m+\beta) >1+p$, follows from the same procedure by setting
\begin{center}
$$\lim_{l \to 0^+} \Upsilon_l(\Theta) = \widetilde{\Upsilon}(\Theta).$$
\end{center}
To prove formulas (3) and (4) we let $k>0$ and proceed as in the proof of formulas (1) and (2). We choose the same scale as follows
\begin{center}
$\Upsilon_l(\Theta) = l\Upsilon(l^{\gamma}\Theta) \iff \Upsilon(\Theta) = l^{-1}\Upsilon_l(l^{-\gamma}\Theta)$.
\end{center}
We set $Z = l^{\gamma}\Theta$. It follows from \eqref{PP1} that
\begin{align}
\frac{d\Upsilon_l}{d\Theta} = l^{1+\gamma}\frac{d\Upsilon}{dZ} = l^{1+\gamma}\Big(k+bmZ^{m+\beta-1}\Upsilon^{-\frac{1}{p}}\Big) \\
= kl^{1+\gamma}+bml^{1+\gamma}l^{\gamma(m+\beta-1)}l^{\frac{1}{p}}\Theta^{m+\beta-1}\Upsilon_l^{-\frac{1}{p}}. \nonumber
\end{align} 
Now, we choose $\gamma$ such that
\begin{center}
$1+\gamma = 0 \implies \gamma = -1$.
\end{center}
So we have that
\begin{equation}\label{Scale5}
\frac{d\Upsilon_l}{d\Theta} = k+bml^{\frac{1+p-p(m+\beta)}{p}}\Theta^{m+\beta-1}\Upsilon_l^{-\frac{1}{p}}.
\end{equation}
From our previous results we that there exists a unique solution to \eqref{Scale5}. To prove formula (3), since  $p(m+\beta) < 1+p$, we set
\begin{center}
$$\lim_{l \to 0^+} \Upsilon_l(\Theta) = \widetilde{\Upsilon}(\Theta).$$
\end{center}
To prove the existence of this limit, let $0 \leq \Gamma < \Delta < +\infty$. We show
\begin{enumerate}
\item $\{\Upsilon_l\}$ is uniformly bounded, i.e., $\abs{\Upsilon_l(\Theta)} \leq C$, for all $\Theta \in [\Gamma, \Delta]$ and $l$, where $C$ is independent of $l$. 
\item $\{\Upsilon_l\}$ is equicontinuous, i.e., for any $\varepsilon > 0$, there exists $\delta = \delta_\varepsilon > 0$ such that for all $ \Theta, \Theta_0 \in [\Gamma, \Delta]$ we have
\[\abs{\Theta-\Theta_0} < \delta \implies \abs{\Upsilon_l(\Theta)-\Upsilon_l(\Theta_0)} < \varepsilon, \forall \, l.\]
\end{enumerate}
First we prove that $\{\Upsilon_l\}$ is uniformly bounded. Since we want to pass $l$ to zero, we fix $l \in (0,1]$. So we have that
\[\frac{d\Upsilon_l}{d\Theta} = k+bml^{\frac{1+p-p(m+\beta)}{p}}\Theta^{m+\beta-1}\Upsilon_l^{-\frac{1}{p}} \leq k+bm\Theta^{m+\beta-1}\Upsilon_1^{-\frac{1}{p}} = \frac{d\Upsilon_1}{d\Theta}.\]
Choosing $\Gamma = 0$ we have that $\Upsilon_l(0) = \Upsilon_1(0) = 0$, so by applying the comparison theorem we have
\[ 0 \leq \Upsilon_l(\Theta) \leq \Upsilon_1(\Theta), \, \forall \, \Theta \in [0,\Delta], \, \forall \, l \in (0,1].\]
It remains to show that $\frac{d\Upsilon_l}{d\Theta}$ is uniformly bounded. Let $\Theta \in [\Gamma,\Delta]$. Since $k>0$ we have that
\[\frac{d\Upsilon_l}{d\Theta} \geq k \implies \Upsilon_l(\Theta) \geq k\Theta \implies \Upsilon_l(\Gamma) \geq k\Gamma > 0 \implies \Upsilon_l^{-\frac{1}{p}}(\Gamma) \leq (k\Gamma)^{-\frac{1}{p}}. \]
So we have
\[\frac{d\Upsilon_l}{d\Theta} = k+bml^{\frac{1+p-p(m+\beta)}{p}}\Theta^{m+\beta-1}\Upsilon_l^{-\frac{1}{p}}(\Theta) \leq k+bm\Delta ^{m+\beta-1}(k\Gamma)^{-\frac{1}{p}} < +\infty.\]
This holds for all $l \in (0,1]$.
Since $\frac{d\Upsilon_l}{d\Theta}$ is uniformly bounded on $[\Gamma, \Delta]$ it follows that $\Upsilon_l(\Theta)$ is uniformly bounded on $[\Gamma, \Delta]$. 
Now we need to show that $\{\Upsilon_l\}$ is equicontinuous on $[\Gamma, \Delta]$. Let $\Theta, \Theta_0 \in [\Gamma, \Delta]$. We need to show that for any $\varepsilon > 0$, there exists $\delta = \delta_\varepsilon > 0$ such that
\[\abs{\Theta-\Theta_0} < \delta \implies \abs{\Upsilon_l(\Theta)-\Upsilon_l(\Theta_0)} < \varepsilon, \forall \, l.\]
By Lagrange's mean value theorem, for all $\theta \in [0,1]$, we have
\[\abs{\Upsilon_l(\Theta)-\Upsilon_l(\Theta_0)} = \abs{\frac{d\Upsilon_l(\Theta_0+\theta(\Theta-\Theta_0))}{d\Theta}(\Theta-\Theta_0)} \leq C\abs{\Theta-\Theta_0} < C\delta.\] Choosing $\delta = \frac{\varepsilon}{C}$ ensures that $\abs{\Upsilon_l(\Theta)-\Upsilon_l(\Theta_0)} < \varepsilon, \forall \, l$. So $\{\Upsilon_l\}$ is equicontinuous on $[\Gamma, \Delta]$.
Since $\{\Upsilon_l\}$ is both uniformly bounded and equicontinuous on $[\Gamma, \Delta]$, and since $[\Gamma, \Delta]$ is an arbitrary compact subset of $[0,+\infty)$, there exists $\widetilde{\Upsilon}(\Theta)$ such that for some subsequence $l'$ we have
\[\lim_{l'\rightarrow 0^+}\Upsilon_{l'}(\Theta)= \widetilde{\Upsilon}(\Theta), \, \, \forall \, \Theta>0.\]
Where $\widetilde{\Upsilon}(\Theta)$ solves
\begin{equation}\label{10}\begin{cases}
\frac{d{\Upsilon}}{d\Theta} = k,\, \Theta > 0,\\[.1cm]
\Upsilon(0)=0.
\end{cases}\end{equation}
So $\widetilde{\Upsilon}(\Theta) = k\Theta$, and we have
\[\lim_{l\rightarrow 0^+}\Upsilon_l(\Theta) = \lim_{l\rightarrow 0^+} l\Upsilon(l^{\gamma}\Theta) = k\Theta, \, \Theta > 0.\]
Recall that $Z = l^{\gamma}\Theta \implies \Theta = l^{-\gamma}Z$. So we have that
\begin{align}
\lim_{l \to 0^+} \Upsilon_l(\Theta) = \lim_{l \to 0^+} l\Upsilon(Z) = \widetilde{\Upsilon}(\Theta) = klZ  \nonumber\\
\implies \lim_{Z \to +\infty} \frac{\Upsilon(Z)}{Z} = k. \nonumber
\end{align}
Therefore,
\begin{center}
$\Upsilon(\Theta) \sim k\Theta, \text { as } \Theta \to +\infty$.
\end{center}
Note that formula (4), where $p(m+\beta)>1+p$, follows from the same procedure by setting
\begin{center}
$$\lim_{l \to +\infty} \Upsilon_l(\Theta) = \widetilde{\Upsilon}(\Theta).$$
\end{center}
To prove formulas (5) and (6) we let $k<0$ and proceed as in the proof of the previous formulas. We choose the same scale as follows
\begin{center}
$\Upsilon_l(\Theta) = l\Upsilon(l^{\gamma}\Theta) \iff \Upsilon(\Theta) = l^{-1}\Upsilon_l(l^{-\gamma}\Theta)$.
\end{center}
We set $Z = l^{\gamma}\Theta$. It follows from \eqref{PP1} that
\begin{align}
\frac{d\Upsilon_l}{d\Theta} = l^{1+\gamma}\frac{d\Upsilon}{dZ} = l^{1+\gamma}\Big(k+bmZ^{m+\beta-1}\Upsilon^{-\frac{1}{p}}\Big) \\
= kl^{1+\gamma}+bml^{1+\gamma}l^{\gamma(m+\beta-1)}l^{\frac{1}{p}}\Theta^{m+\beta-1}\Upsilon_l^{-\frac{1}{p}}. \nonumber
\end{align} 
Now, we choose $\gamma$ such that
\begin{center}
$1+\gamma = 1+\gamma+\gamma(m+\beta-1)+\frac{1}{p} \implies \gamma = -\frac{1}{p(m+\beta-1)}$.
\end{center}
So we have that
\begin{equation}\label{Scale8}
l^{\frac{1+p-p(m+\beta)}{p(m+\beta-1)}}\frac{d\Upsilon_l}{d\Theta} = k+bm\Theta^{m+\beta-1}\Upsilon_l^{-\frac{1}{p}}.
\end{equation}
From our previous results we that there exists a unique solution to \eqref{Scale8}. To prove formula (5), since $p(m+\beta)<1+p$, we set
\begin{center}
$$\lim_{l \to 0^+} \Upsilon_l(\Theta) = \widetilde{\Upsilon}(\Theta).$$
\end{center}
As before, we have to show that the above limit exists. In this case, it's enough to prove that $\{\Upsilon_l\}$ is uniformly bounded on any compact interval, $[\Gamma, \Delta]$. From the equation we have that
\[k + bm\Theta^{m+\beta}\Upsilon_l^{-\frac{1}{p}} \geq 0 \implies 0 \leq \Upsilon_l(\Theta) \leq \bigg(-\frac{k}{bm}\bigg)^{-p}\Theta^{p(m+\beta-1)}, \, \Theta > 0.\]
It remains to show that $\frac{d\Upsilon_l}{d\Theta}$ is uniformly bounded on $[\Gamma, \Delta]$. Consider
\[ l^{\frac{1+p-p(m+\beta)}{p(m+\beta-1)}}\frac{d\Upsilon_l}{d\Theta} =k+bm\Theta^{m+\beta-1}\Upsilon_l^{-\frac{1}{p}} \implies \frac{d\Upsilon_l}{d\Theta} = l^{\frac{p(m+\beta)-(1+p)}{p(m+\beta-1)}}\bigg(k+bm\Theta^{m+\beta-1}\Upsilon_l^{-\frac{1}{p}} \bigg),\]
\[(l+1)^{\frac{1+p-p(m+\beta)}{p(m+\beta-1)}}\frac{d\Upsilon_{l+1}}{d\Theta} = k+bm\Theta^{m+\beta-1}\Upsilon_{l+1}^{-\frac{1}{p}}\]
\[\implies \frac{d\Upsilon_{l+1}}{d\Theta} = (l+1)^{\frac{p(m+\beta)-(1+p)}{p(m+\beta-1)}}\bigg(k+bm\Theta^{m+\beta-1}\Upsilon_{l+1}^{-\frac{1}{p}} \bigg) \leq l^{\frac{p(m+\beta)-(1+p)}{p(m+\beta-1)}}\bigg(k+bm\Theta^{m+\beta-1}\Upsilon_{l+1}^{-\frac{1}{p}} \bigg).\]
Define $Z(\Theta) := \Upsilon_{l+1}(\Theta)-\Upsilon_l(\Theta)$. 
By mean value theorem, for all $\theta \in [0,1]$, we have
\[\frac{dZ}{d\Theta} \leq  l^{\frac{p(m+\beta)-(1+p)}{p(m+\beta-1)}}bm\Theta^{m+\beta-1}\bigg(\Upsilon_{l+1}^{-\frac{1}{p}}-\Upsilon_l^{-\frac{1}{p}}\bigg) =\] 
\[= - l^{\frac{p(m+\beta)-(1+p)}{p(m+\beta-1)}}\frac{bm}{p}\Theta^{m+\beta-1}(\Upsilon_l+\theta(\Upsilon_{l+1}-\Upsilon_l))^{-\frac{1+p}{p}}Z\]
\[\implies l^{\frac{1+p-p(m+\beta}{p(m+\beta-1)}}\frac{dZ}{d\Theta} \leq -\frac{bm}{p}\Theta^{m+\beta-1}(\Upsilon_l+\theta(\Upsilon_{l+1}-\Upsilon_l))^{-\frac{1+p}{p}}Z.\]
Since $Z(0)=0$, it follows from the comparison theorem that $\Upsilon_{l+1}(\Theta)\leq \Upsilon_l(\Theta), \, \Theta \in [\Gamma, \Delta]$. Hence $\{\Upsilon_l\}$ is a monotonically decreasing sequence as $l \to 0^+$, and since $\Upsilon_l(\Theta) > 0$, for all $ \Theta > 0$, there exists $\widetilde{\Upsilon}(\Theta)$ such that
\[\lim_{l \to 0^+} \Upsilon_l(\Theta) = \widetilde{\Upsilon}(\Theta).\]
Now, for any $\nu \in C^{\infty}_0(\Gamma, \Delta)$, we appeal to the integral identity
\[\displaystyle \int_{\Gamma}^{\Delta}l^{\frac{1+p-p(m+\beta)}{p(m+\beta-1)}}\Upsilon_l\nu' + \big(k+bm\Theta^{m+\beta-1}\Upsilon_l^{-\frac{1}{p}}\big)\nu d\Theta = 0.\]
Letting $l \to 0^+$ we have
\[\displaystyle \int_{\Gamma}^{\Delta} \big(k+bm\Theta^{m+\beta-1}\widetilde{\Upsilon}^{-\frac{1}{p}}\big)\nu d\Theta = 0.\]
Since $\nu$ is arbitrary we necessarily have that
\[k+bm\Theta^{m+\beta-1}\widetilde{\Upsilon}^{-\frac{1}{p}}=0.\]
Solving for $\widetilde{\Upsilon}$ we have that
\begin{equation}\label{Scale10}
\widetilde{\Upsilon}(\Theta) = \bigg(-\frac{k}{bm}\bigg)^{-p}\Theta^{p(m+\beta-1)}. 
\end{equation}
Recall that $Z = l^{\gamma}\Theta \implies \Theta = l^{-\gamma}Z$. So we have that
\begin{align}
\lim_{l \to 0^+} \Upsilon_l(\Theta) = \lim_{l \to 0^+} l\Upsilon(Z) = \widetilde{\Upsilon}(\Theta) = \bigg(-\frac{k}{bm}\bigg)^{-p}lZ^{p(m+\beta-1)}    \nonumber\\
\implies \lim_{Z \to +\infty} \frac{\Upsilon(Z)}{Z^{p(m+\beta-1)}} = \bigg(-\frac{k}{bm}\bigg)^{-p}. \nonumber
\end{align}
Therefore,
\begin{center}
$\Upsilon(\Theta) \sim \big(-\frac{k}{bm}\big)^{-p}\Theta^{p(m+\beta-1)}, \text { as } \Theta \to +\infty$.
\end{center}
The proof of formula (6) follows from a similar argument.
\end{proof}
\section{Proof of the Main Result}
\label{sec:main result}

Using the results above, we prove \cref{Theorem 1}. 

\begin{proof}[Proof of \cref{Theorem 1}]
As long as $\varphi(z) \neq 0$ ($\Upsilon(\varphi(z)) \neq 0$), we can rewrite \eqref{EQ5} in the following way
\begin{equation}\label{EQ7}
m\varphi^{m-1}\Upsilon^{-\frac{1}{p}}(\varphi(z))d\varphi(z) = dz.
\end{equation}
We will prove formula (2), the proof of formula (1) and formulas (3)-(6) follows in a similar way by choosing the appropriate asymptotic formula for $\Upsilon(\Theta)$ from \cref{Lemma 3}. \\
Since $p(m+\beta) > 1+p$, from \cref{Lemma 3} we know that
\[\Upsilon(\Theta) \sim \bigg[\frac{bm(1+p)}{p(m+\beta)}\bigg]^{\frac{p}{1+p}}\Theta^{\frac{p(m+\beta)}{1+p}}, \text { as } \Theta \rightarrow +\infty.\]
By \eqref{EQ7}:
\begin{equation}\label{INT1}
m \displaystyle \int_{0}^{\varphi(z)}\Theta^{m-1}\Upsilon^{-\frac{1}{p}}(\Theta)d\Theta = z.
\end{equation}
Using this fact and using the estimate above, $\forall\,\varepsilon > 0$ we have
\begin{center}
$\Big(\frac{m(1+p)}{mp-\beta}\Big(\Big[\frac{bm(1+p)}{p(m+\beta)}\Big]^{\frac{p}{1+p}} - \varepsilon \Big)^{-\frac{1}{p}}\Big)^{-\frac{1+p}{mp-\beta}}\leq z^{-\frac{1+p}{mp-\beta}}\varphi(z) \leq \Big(\frac{m(1+p)}{mp-\beta}\Big(\Big[\frac{bm(1+p)}{p(m+\beta)}\Big]^{\frac{p}{1+p}} + \varepsilon \Big)^{-\frac{1}{p}}\Big)^{-\frac{1+p}{mp-\beta}}.$
\end{center}
Passing $z \rightarrow +\infty$, we have
\begin{center}
$\Big(\frac{m(1+p)}{mp-\beta}\Big(\Big[\frac{bm(1+p)}{p(m+\beta)}\Big]^{\frac{p}{1+p}} - \varepsilon \Big)^{-\frac{1}{p}}\Big)^{-\frac{1+p}{mp-\beta}}\leq  \liminf\limits_{z\rightarrow+\infty}z^{-\frac{1+p}{mp-\beta}}\varphi(z) \leq  \limsup\limits_{z\rightarrow+\infty}z^{-\frac{1+p}{mp-\beta}}\varphi(z) \leq \Big(\frac{m(1+p)}{mp-\beta}\Big(\Big[\frac{bm(1+p)}{p(m+\beta)}\Big]^{\frac{p}{1+p}} + \varepsilon \Big)^{-\frac{1}{p}}\Big)^{-\frac{1+p}{mp-\beta}}.$
\end{center}
Now, passing $\varepsilon \to 0^+$, we have
\[\lim\limits_{z\rightarrow+\infty}z^{-\frac{1+p}{mp-\beta}}\varphi(z) = \bigg(\frac{m(1+p)}{mp-\beta}\bigg(\bigg[\frac{bm(1+p)}{p(m+\beta)}\bigg]^{\frac{p}{1+p}}\bigg)^{-\frac{1}{p}}\bigg)^{-\frac{1+p}{mp-\beta}} = C_*.\]
Formula (2) is proved.
\end{proof}

\section*{Acknowledgement}
\label{Acknowledgement}
I would like to thank my doctoral advisor, Professor Ugur G. Abdulla, for his invaluable insights into this problem and for sharing his ideas with me over many meetings.

\FloatBarrier
      	\bibliographystyle{plain}
	\bibliography{references}
\end{document}